\newtheorem{theorem}{Theorem}
\newtheorem{proposition}[theorem]{Proposition}
\newtheorem{lemma}[theorem]{Lemma}
\numberwithin{equation}{section}
\begin{document}

\title[Quantitative norm convergence of double ergodic averages]{Quantitative norm convergence of double ergodic averages
associated with two commuting group actions}

\author{Vjekoslav Kova\v{c}}
\address{Department of Mathematics, Faculty of Science, University of Zagreb, Bijeni\v{c}ka cesta 30, 10000 Zagreb, Croatia}
\email{vjekovac@math.hr}
\thanks{This work has been fully supported by the Croatian Science Foundation under the project 3526.}

\subjclass[2010]{Primary 37A30; Secondary 42B20}

\begin{abstract}
We study double averages along orbits for measure preserving actions of $\mathbb{A}^\omega$,
the direct sum of countably many copies of a finite abelian group $\mathbb{A}$.
In this article we show an $\textup{L}^p$ norm-variation estimate for these averages,
which in particular reproves their convergence in $\textup{L}^p$
for any finite $p$ and for any choice of two $\textup{L}^{\infty}$ functions.
The result is motivated by recent questions on quantifying convergence of multiple ergodic averages.
\end{abstract}

\maketitle

\section{Introduction}

Let $(X,\mathcal{F},\mu)$ be a probability space and let $T_1,T_2,\ldots,T_r\colon X\to X$
be mutually commuting measure $\mu$ preserving transformations.
Multiple averages
\begin{equation}\label{eqmultipleziaverages}
\textup{M}_n(f_1,f_2,\ldots,f_r) := \frac{1}{n}\sum_{k=0}^{n-1}(f_1\circ T_1^k)(f_2\circ T_2^k)\cdots(f_r\circ T_r^k)
\end{equation}
for $f_1,f_2,\ldots,f_r\in\textup{L}^\infty$ were motivated by the work of Furstenberg and Katznelson \cite{FurstenbergKatznelson78}
on multidimensional Szemer\'{e}di's theorem and have attracted much attention in the literature.
Their convergence in the $\textup{L}^2$ norm as $n\to\infty$ was first established by Tao \cite{Tao08}.
For $r=1$ this was shown much earlier by von Neumann \cite{Neumann32} and for $r=2$ by Conze and Lesigne \cite{ConzeLesigne84}.
On the other hand, far reaching generalizations were later given by Austin \cite{Austin10} and Walsh \cite{Walsh12}.

However, it is an interesting problem to quantify the convergence of the sequence \eqref{eqmultipleziaverages} by controlling the
number of its jumps of a certain size.
Such a result in the case of a single transformation (i.e.\@ $r=1$) is the following norm-variation estimate,
\begin{equation}\label{eqlinearresult}
\sup_{n_0<n_1<\cdots<n_m} \sum_{j=1}^{m} \big\|\textup{M}_{n_{j-1}}(f)-\textup{M}_{n_{j}}(f)\big\|^{p}_{\textup{L}^{p}(X,\mathcal{F},\mu)}
\,\leq\, C_{p}\, \|f\|^{p}_{\textup{L}^{p}(X,\mathcal{F},\mu)} ,
\end{equation}
which holds for $p\geq 2$ with some constant $C_p$ depending only on the exponent $p$.
The supremum is taken over all positive integers $m$ and all increasing choices of positive integers $n_0,n_1,\ldots,n_m$.
Inequality \eqref{eqlinearresult} was first proved by Jones, Ostrovskii, and Rosenblatt \cite{JonesOstrovskiiRosenblatt96} in the case $p=2$
and generalized by Jones, Kaufman, Rosenblatt, and Wierdl \cite{JonesKaufmanRosenblattWierdl98} to $p\geq 2$.
Avigad and Rute \cite{AvigadRute13} interpreted \eqref{eqlinearresult} by observing that for any $\varepsilon>0$ the sequence
$(\textup{M}_n(f))_{n=1}^{\infty}$ has $O\big(\varepsilon^{-p}\|f\|^p_{\textup{L}^p}\big)$ jumps
of size at least $\varepsilon$ in the $\textup{L}^p$ norm.
They also studied analogues of \eqref{eqlinearresult} in the setting of more general Banach spaces.

Furthermore, Avigad and Rute \cite[\S 7]{AvigadRute13} asked if it was possible to show any quantitative estimates in this direction
for multiple averages \eqref{eqmultipleziaverages} when $r\geq 2$.
So far the only quantitative results exist when $r=2$ and the transformation $T_2$ is an integer power of $T_1$,
and can be found in the work of Bourgain \cite{Bourgain90} and Demeter \cite{Demeter07}; also see a variational estimate
by Do, Oberlin, and Palsson \cite{DoOberlinPalsson12}.
It is also worth noting that papers \cite{Bourgain90}, \cite{Demeter07}, \cite{JonesKaufmanRosenblattWierdl98}
are actually dealing with estimates that establish pointwise convergence a.e., which is significantly stronger than convergence in the $\textup{L}^p$ norm.

We are able to make a slight progress on this question in the particular case $r=2$ by adapting the technique that originated in
the papers \cite{Kovac11}, \cite{Kovac12}, and \cite{KovacThiele13} by Thiele and the author.
However, due to several technical reasons explained later we need to work in a slightly different group setting,
borrowed for instance from \cite{BergelsonTaoZiegler13}.

Let us fix a finite abelian group $\mathbb{A}$ and consider the group $\mathbb{A}^\omega$ defined as the direct sum
$$ \mathbb{A}^\omega := \bigoplus_{k=0}^{\infty}\mathbb{A} = \mathbb{A}\oplus\mathbb{A}\oplus\cdots $$
of countably many copies of $\mathbb{A}$.
Thus, elements of $\mathbb{A}^\omega$ are simply sequences $a=(a_k)_{k=0}^{\infty}$ of elements from $\mathbb{A}$
such that $a_k=0$ if the index $k$ is large enough.
The most natural \emph{F{\o}lner sequence} \cite{Folner55} for $\mathbb{A}^\omega$ is
$$ \Phi_n = \big\{(a_k)_{k=0}^{\infty}\in\mathbb{A}^\omega : a_k=0 \text{ for }k\geq n\big\} \cong \mathbb{A}^n . $$
Furthermore, let $S=(S^a)_{a\in\mathbb{A}^\omega}$ and $T=(T^a)_{a\in\mathbb{A}^\omega}$ be two commuting measure
preserving $\mathbb{A}^\omega$-actions on a probability space $(X,\mathcal{F},\mu)$, i.e.
\begin{itemize}
\item $S^a,T^a\colon X\to X$ are measurable maps for each $a\in\mathbb{A}^\omega$,
\item $S^\mathbf{0}=T^\mathbf{0}=\textup{identity}$,
\item $S^a S^b = S^{a+b}$, \,$T^a T^b = T^{a+b}$, \,$S^a T^b = T^b S^a$\, for $a,b\in\mathbb{A}^\omega$,
\item $\mu(S^{a}E) = \mu(E) = \mu(T^{a}E)$ for $a\in\mathbb{A}^\omega$ and $E\in\mathcal{F}$.
\end{itemize}
We also impose a technical assumption that $(X,\mathcal{F})$ is a standard Borel measurable space,
i.e.\@ $\mathcal{F}$ is a Borel $\sigma$-algebra of some separable complete metric space.

It is now natural to consider double averages of the form
\begin{equation}\label{eqdoublegroupaverages}
\textup{M}^{\mathbb{A}}_{n}(f,g) := \frac{1}{|\Phi_n|}\sum_{a\in \Phi_n}(f\circ S^a)(g\circ T^a)
\end{equation}
for some $\mathcal{F}$-measurable functions $f,g\colon X\to\mathbb{C}$ and a nonnegative integer $n$.
Here we write $|\Phi|$ for the number of elements of a finite set $\Phi$.
Slightly modified averages (replacing $T^a$ by $S^a T^a$) were already shown to converge by
Bergelson, McCutcheon, and Zhang \cite{BergelsonMcCutcheonZhang97}, even for a general countable amenable group.
Multiple ergodic averages for amenable groups were discussed by Zorin-Kranich \cite{ZorinKranich11}.
The case of several ``powers'' of the same measure preserving action of $(\mathbb{Z}/p\mathbb{Z})^\omega$, $p$ prime,
was studied in detail by Bergelson, Tao, and Ziegler \cite{BergelsonTaoZiegler13}, with more emphasis on identifying the limits.
Let us remark that all these results are either only qualitative or very weakly quantitative in nature,
in the sense that they do not provide any explicit bounds on the number of $\varepsilon$-jumps defined below.

Now we can formulate the main result of this paper.
\begin{theorem}\label{bilintheoremmain}
For every $2\leq p<\infty$ there exists a finite constant $C_{p}$
such that the following norm-variational estimate holds for any $\mathbb{A},S,T,\mu,f,g$ as above:
$$ \sup_{\substack{m,n_0,n_1,\ldots,n_m\\ 0\leq n_0<n_1<\cdots<n_m}}
\sum_{j=1}^{m} \big\|\textup{M}^{\mathbb{A}}_{n_{j-1}}(f,g)-\textup{M}^{\mathbb{A}}_{n_{j}}(f,g)\big\|^{p}_{\textup{L}^{p}(X,\mathcal{F},\mu)}
\leq C_{p}\|f\|^{p}_{\textup{L}^{2p}(X,\mathcal{F},\mu)} \|g\|^{p}_{\textup{L}^{2p}(X,\mathcal{F},\mu)} . $$
\end{theorem}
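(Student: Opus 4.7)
My plan is to adapt the twisted paraproduct energy/telescoping framework of \cite{Kovac11,Kovac12,KovacThiele13} to the F{\o}lner tower $\Phi_0\subset\Phi_1\subset\cdots$ in $\mathbb{A}^\omega$, and to reach general $p$ by bilinear interpolation. Viewing the bilinear operator $B_m\colon(f,g)\mapsto\big(\textup{M}^{\mathbb{A}}_{n_{j-1}}(f,g)-\textup{M}^{\mathbb{A}}_{n_j}(f,g)\big)_{j=1}^{m}$ as mapping into the mixed-norm space $\ell^p(\{1,\ldots,m\};\textup{L}^p(\mu))$, the trivial estimate $\|\textup{M}^{\mathbb{A}}_{n}(f,g)\|_{\textup{L}^\infty}\leq\|f\|_{\textup{L}^\infty}\|g\|_{\textup{L}^\infty}$ gives an $\textup{L}^\infty\times\textup{L}^\infty\to\ell^\infty\textup{L}^\infty$ endpoint for $B_m$ uniform in $m$ and in the subsequence. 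Bilinear Riesz--Thorin interpolation then reduces Theorem~\ref{bilintheoremmain} to the single endpoint
$$\sum_{j=1}^{m}\big\|\textup{M}^{\mathbb{A}}_{n_{j-1}}(f,g)-\textup{M}^{\mathbb{A}}_{n_j}(f,g)\big\|^{2}_{\textup{L}^{2}}\leq C\|f\|^{2}_{\textup{L}^{4}}\|g\|^{2}_{\textup{L}^{4}}$$
uniform in $m$ and in $0\leq n_0<n_1<\cdots<n_m$. Dualizing, the task reduces further to bounding the trilinear form
$$\Lambda(f,g,(h_j)):=\sum_{j=1}^{m}\int_X\big(\textup{M}^{\mathbb{A}}_{n_{j-1}}(f,g)-\textup{M}^{\mathbb{A}}_{n_j}(f,g)\big)\,\overline{h_j}\,d\mu$$
by $C\|f\|_{\textup{L}^4}\|g\|_{\textup{L}^4}$ for all $(h_j)_{j=1}^{m}$ with $\sum_j\|h_j\|_{\textup{L}^2}^{2}\leq 1$.

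The main structural input is the atomic jump identity coming from $[\Phi_{n+1}:\Phi_n]=|\mathbb{A}|$: for any coset representatives $\{a_{n,b}\}_{b\in\mathbb{A}}\subset\Phi_{n+1}$ of $\Phi_{n+1}/\Phi_n$,
$$\textup{M}^{\mathbb{A}}_{n}(f,g)-\textup{M}^{\mathbb{A}}_{n+1}(f,g)=\frac{1}{|\mathbb{A}|}\sum_{b\in\mathbb{A}}\big[\textup{M}^{\mathbb{A}}_{n}(f-f\circ S^{a_{n,b}},\,g)+\textup{M}^{\mathbb{A}}_{n}(f\circ S^{a_{n,b}},\,g-g\circ T^{a_{n,b}})\big],$$
placing a discrete derivative in the fresh direction onto exactly one of the two factors. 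Telescoping $\textup{M}^{\mathbb{A}}_{n_{j-1}}-\textup{M}^{\mathbb{A}}_{n_j}=\sum_{n=n_{j-1}}^{n_j-1}(\textup{M}^{\mathbb{A}}_{n}-\textup{M}^{\mathbb{A}}_{n+1})$ and expanding the averages as integrals over $X\times\mathbb{A}^\omega$, the form $\Lambda$ becomes a sum over scales $n$ of trilinear expressions of exactly the algebraic shape of the twisted paraproduct. For each fixed $n$, applying Cauchy--Schwarz successively in the $S$- and $T$-orbit variables --- the cascade pioneered in \cite{KovacThiele13} --- absorbs the derivative into a Gowers-type box norm that is in turn dominated by the $\textup{L}^4$ norm of the function on which it acts.

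The principal obstacle is aggregating the single-scale estimates back into a bound on $\Lambda$ as a whole. A term-by-term bound over scales $n$ would naively produce a factor $\max_j(n_j-n_{j-1})$ or worse, corresponding to the absence of orthogonality between blocks $(n_{j-1},n_j]$. To obtain the sharp $\sum_j\|h_j\|_{\textup{L}^2}^2$-type conclusion, the energy/Cauchy--Schwarz cascade has to be run globally, with the full sequence $(h_j)$ present from the outset, so that disjointness of the jump intervals is exploited before rather than after the box-norm reduction. A plausible template is the long-jump/short-jump split of \cite{JonesOstrovskiiRosenblatt96,JonesKaufmanRosenblattWierdl98} combined with a Bellman-function-style monotonicity argument adapted to the bilinear twisted setting; making this compatible with the box-norm estimate is the chief technical difficulty.
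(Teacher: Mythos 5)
Your outline correctly locates the difficulty, but it does not resolve it: the aggregation of single-scale estimates over the blocks $(n_{j-1},n_j]$ and over $j$, which you yourself describe as ``the chief technical difficulty'' and for which you offer only a ``plausible template,'' is exactly the heart of the matter, and the route you propose (dualize against $(h_j)_j$ with $\sum_j\|h_j\|_{\textup{L}^2}^2\leq 1$, telescope $\textup{M}^{\mathbb{A}}_{n_{j-1}}-\textup{M}^{\mathbb{A}}_{n_j}$ into single-scale jumps, then run a Cauchy--Schwarz/box-norm cascade scale by scale) runs into precisely the loss of a factor $\max_j(n_j-n_{j-1})$ that you identify, with no mechanism supplied to avoid it. The paper's proof avoids dualization entirely. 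The key device is the elementary convexity inequality of Lemma~\ref{diffestimatelemma}, $|a|^p-|b|^p-p(a-b)b|b|^{p-2}\geq c_p|a-b|^p$: applying it with $a=\textup{A}_{k_{j+1}}(F,G)$, $b=\textup{A}_{k_j}(F,G)$ and summing over $j$, the terms $|a|^p-|b|^p$ telescope, and what remains besides $\|\textup{A}_{k_m}(F,G)\|_{\textup{L}^p}^p$ is a single $2p$-linear form $\Lambda(F,G)$ involving only $F$ and $G$ (for integer $p$), in which the difference of scales acts on \emph{one} integration variable while $p-1$ copies of the coarse-scale averaging remain intact. Those $p-1$ untouched copies are what supply the orthogonality in $j$ and in the intermediate scales: after a Haar expansion of $\varphi_{k_{j+1}}-\varphi_{k_j}$ and a single Cauchy--Schwarz, the resulting form $\Theta$ is controlled by the summation-by-parts identity, a Jensen-inequality positivity statement, and a uniform bound on $\Xi_k$ (Lemma~\ref{auxineqlemma}). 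With a general dual sequence $(h_j)$ in place of the powers $\textup{A}_{k_j}(F,G)^{p-1}$ this structure is simply absent, so your trilinear form does not obviously close; this is a genuine gap, not a deferred technicality.

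Two further remarks. First, your interpolation scheme (reduce everything to $p=2$ against an $\textup{L}^\infty\times\textup{L}^\infty\to\ell^\infty\textup{L}^\infty$ endpoint) is a genuinely different, and in principle economical, reduction; but the paper cannot use it because the Lemma~\ref{diffestimatelemma} mechanism requires expanding $\textup{A}_{k_j}(F,G)^{p-1}$ multilinearly, hence integer $p$, and the paper instead proves every integer $p\geq 2$ and interpolates between consecutive integers (also sidestepping the mild delicacy of vector-valued complex interpolation with an $\textup{L}^\infty$ endpoint). Second, you pass over the transference step: the theorem concerns an arbitrary system $(X,\mathcal{F},\mu)$, and the paper first reduces to the ergodic case by ergodic decomposition and then to a concrete estimate on $\mathbb{A}^\omega\times\mathbb{A}^\omega$ (equivalently, Proposition~\ref{bilinproprestated} on $\mathbb{R}_+^2$) via a generic point furnished by the Lindenstrauss pointwise ergodic theorem; if you intend to run the Cauchy--Schwarz cascade directly on $X$ you would need to justify the box-norm manipulations there, whereas on the group model they are transparent.
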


The number of \emph{$\varepsilon$-jumps} (or \emph{$\varepsilon$-fluctuations}) of a sequence $(M_n)_{n=0}^{\infty}$
in the space $\textup{L}^{p}(X,\mathcal{F},\mu)$ is defined to be the largest nonnegative integer $m$ for which there exist indices
$$ n_1<n'_1\leq n_2<n'_2\leq\cdots\leq n_m<n'_m $$
satisfying
$$ \|M_{n_j}-M_{n'_j}\|_{\textup{L}^p}\geq\varepsilon \text{ for } j=1,2,\ldots,m. $$
If such a finite $m$ does not exist, we conventionally set the number of $\varepsilon$-jumps to infinity.

Take $\varepsilon>0$, \,$2\leq p<\infty$, and two complex functions $f,g$ normalized by
$$ \|f\|_{\textup{L}^{2p}}=\|g\|_{\textup{L}^{2p}}=1 . $$
An immediate consequence of Theorem \ref{bilintheoremmain} is that $(\textup{M}^{\mathbb{A}}_{n}(f,g))_{n=0}^{\infty}$
has $O(\varepsilon^{-p})$ $\varepsilon$-jumps in $\textup{L}^p$.
On the other hand, if we take any $1\leq p<\infty$ and the functions such that
$$ \|f\|_{\textup{L}^\infty}=\|g\|_{\textup{L}^\infty}=1 , $$
then we can apply Theorem \ref{bilintheoremmain} with a sufficiently large exponent $q$ in the place of $p$
and use nestedness of the $\textup{L}^p$ spaces.
This way we conclude that the number of $\varepsilon$-jumps in the $\textup{L}^p$ norm of the sequence \eqref{eqdoublegroupaverages}
is $O(\varepsilon^{-N})$ for some $N>0$.
In particular, the number of $\varepsilon$-jumps is finite for any $\varepsilon>0$, so
we reprove the fact that the averages \eqref{eqdoublegroupaverages} converge in mean,
a qualitative result that can be obtained along the lines of the papers \cite{BergelsonMcCutcheonZhang97} or \cite{ConzeLesigne84}.

In order to prove Theorem \ref{bilintheoremmain} we will transfer it into a convenient estimate for functions on the real line
using the so-called \emph{Calder\'{o}n transference principle} \cite{Calderon68},
sometimes also named the \emph{reverse Furstenberg correspondence principle} \cite{Furstenberg81}; the details are in Section \ref{sectiontransfer}.
It is not absolutely necessary to transfer the estimate to the Cantor group model of the reals, as we do, and one could equality well work
with the group $\mathbb{A}^\omega$ itself, as a sort of Cantor group model of the integers.
However, working with the Cantor group will make the proof notationally simpler 
and the method from \cite{Kovac11}, \cite{Kovac12}, \cite{KovacThiele13} easier to apply,
all at the cost of a few extra paragraphs.
Moreover, Proposition \ref{bilinproprestated} from Section \ref{sectionanalysis} might also hint how to obtain an analogous result
on the more common averages \eqref{eqmultipleziaverages} for $r=2$,
but it is not yet clear how to resolve some of the technical difficulties; see the comments in Section \ref{sectionclosing}.
Even though one can express optimism towards the possibility of adapting the proof from $\mathbb{A}^\omega$ to $\mathbb{Z}$,
it is quite likely that the ideas presented here are not enough and that they should be combined with some novel time-frequency analysis.

Another source of motivation for quantifying norm convergence of multiple ergodic averages is the well-known problem
of establishing a.e.\@ convergence of averages \eqref{eqmultipleziaverages}, which is still open for $r\geq 2$ and for two general
commuting transformations $T_1$ and $T_2$.
The papers in this direction \cite{Bourgain90}, \cite{Demeter07}, \cite{DemeterThiele10}
are also quantitative in nature and attempt variants of the problem via real-analytic techniques.
However, we do not discuss pointwise convergence in this note.

\section{An estimate on the real line}
\label{sectionanalysis}

The main part of the proof of Theorem \ref{bilintheoremmain} will take place on the real line, as we have already announced.
Some of the arguments could equally well be performed directly on $\mathbb{A}^\omega$, but they would look artificial.

Let $\mathbb{R}_+$ denote the set of nonnegative real numbers, i.e.\@ $[0,\infty)$,
and let us also write $\mathbb{R}_{+}^{N}$ for a Cartesian power $[0,\infty)^N$. 
It will always be understood that $\mathbb{R}_+$ and $\mathbb{R}_{+}^{N}$ are equipped with the standard Borel $\sigma$-algebra and the Lebesgue measure.
We set
$$ d = |\mathbb{A}| = \,\text{the cardinality of }\mathbb{A} $$
and agree to identify $\mathbb{A}$ with $\{0,1,2,\ldots,d-1\}$ in any order, with the only constraint that
$0$ corresponds to the neutral element of $\mathbb{A}$.
The \emph{Cantor group structure} on $\mathbb{R}_+$ compatible with the previous setting is obtained in the following way.
Consider the group
$$ \mathbb{G} := \big\{(a_k)_{k\in\mathbb{Z}}\in\mathbb{A}^\mathbb{Z} \,:\,
\text{there exists } k_0\in\mathbb{Z} \text{ such that } a_k=0 \text{ for each } k>k_0\big\} $$
with coordinate-wise binary operation, with an ultrametric defined by
$$ \rho\big((a_k)_{k\in\mathbb{Z}},(b_k)_{k\in\mathbb{Z}}\big) :=
\left\{\begin{array}{cl} d^{k_0} & \text{ if } k_0 \text{ is the largest } k\in\mathbb{Z} \text{ such that } a_k\neq b_k, \\
0 & \text{ if } (a_k)_{k\in\mathbb{Z}}=(b_k)_{k\in\mathbb{Z}}, \end{array}\right. $$
and with the Haar measure $\lambda_\mathbb{G}$.
If we normalize $\lambda_\mathbb{G}$ properly, the two maps
$$ \begin{array}{ll}
\iota\colon \mathbb{G}\to\mathbb{R}_+, & \iota\colon (a_k)_{k\in\mathbb{Z}} \mapsto \sum_{k\in\mathbb{Z}} a_k d^k , \\[1mm]
\kappa\colon \mathbb{R}_+\to\mathbb{G}, & \kappa\colon t\mapsto (\lfloor d^{-k}t\rfloor\,\textup{mod}\,d)_{k\in\mathbb{Z}}
\end{array} $$
become a.e.-isomorphisms of measure spaces and are a.e.-inverse to each other.
The reason why we do not have exact correspondence between elements of $\mathbb{G}$ and elements of $\mathbb{R}_+$
is that countably many real numbers do not have unique representation in base $d$.
For $x,y\in\mathbb{R}_+$ we can set
$$ x\oplus y := \iota\big(\kappa(x)+\kappa(y)\big), \quad \ominus x := \iota\big(\!-\kappa(x)\big). $$
We use the signs $\oplus$ and $\ominus$ to distinguish from the usual operations on real numbers.
One can summarize the previous construction by saying that the set of nonnegative reals
is viewed (up to an a.e.\@ isomorphism) as a totally disconnected group with digit-wise binary operations,
$$ \Big(\sum_{k\in\mathbb{Z}}x_k d^k\Big)\oplus\Big(\sum_{k\in\mathbb{Z}}y_k d^k\Big)
:= \sum_{k\in\mathbb{Z}}(\underbrace{x_k+y_k}_{\text{in }\mathbb{A}}) d^k,
\quad \ominus\Big(\sum_{k\in\mathbb{Z}}x_k d^k\Big) := \Big(\sum_{k\in\mathbb{Z}}(\underbrace{-x_k}_{\text{in }\mathbb{A}}) d^k\Big) . $$
The case $\mathbb{A}=\mathbb{Z}/d\mathbb{Z}$ is prototypical and appears often in the literature,
especially when $d=2$, which corresponds to the \emph{Walsh model} \cite{Walsh23}.
The reader can refer to \cite{MuscaluTaoThiele03} for more details on the Cantor group structure on $\mathbb{R}_+$.

Take any two Borel-measurable functions $F,G\colon\mathbb{R}_{+}^{2}\to\mathbb{C}$ that are bounded and supported on an interval.
For any $k\in\mathbb{Z}$ we define a bilinear average
$$ \textup{A}_{k}(F,G)(x,y) := d^{k}\!\int_{[0,d^{-k})}\! F(x\oplus t,y) G(x,y\oplus t) \,dt . $$
Now we can formulate an analytic variant of Theorem \ref{bilintheoremmain}.

\begin{proposition}\label{bilinproprestated}
If $p\geq 2$ is a finite exponent, $m$ is a positive integer, and $k_0,k_1,\ldots,k_m$ are integers such that $k_0<k_1<\ldots<k_m$,
then the estimate
$$ \sum_{j=0}^{m-1} \big\|\textup{A}_{k_{j+1}}(F,G)-\textup{A}_{k_{j}}(F,G)\big\|^{p}_{\textup{L}^{p}(\mathbb{R}_{+}^{2})}
\,\leq\, C_{p}\, \|F\|^{p}_{\textup{L}^{2p}(\mathbb{R}_{+}^{2})} \|G\|^{p}_{\textup{L}^{2p}(\mathbb{R}_{+}^{2})} $$
holds with a constant $C_{p}$ depending only on $p$.
\end{proposition}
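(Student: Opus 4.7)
The plan is to adapt the telescoping / Bellman-function strategy for twisted paraproducts on Cantor groups developed in \cite{Kovac11,Kovac12,KovacThiele13}. The first step is a one-step scale recursion: by splitting the integration variable $t\in[0,d^{-k})$ according to its leading digit $s\in\{0,d^{-k-1},2d^{-k-1},\ldots,(d-1)d^{-k-1}\}$, one writes
$$ \textup{A}_{k}(F,G)(x,y) = \frac{1}{d}\sum_{s}\textup{A}_{k+1}\big(F(\cdot\oplus s,\cdot),\,G(\cdot,\cdot\oplus s)\big)(x,y). $$
Subtracting from $\textup{A}_{k+1}(F,G)$ expresses the telescoping difference as a weighted sum of bilinear averages $\textup{A}_{k+1}$ applied to pairs in which at least one input is a \emph{martingale-type difference} $F(u,v)-F(u\oplus s,v)$ or $G(u,v)-G(u,v\oplus s)$ of vanishing mean over the dyadic fiber of scale $d^{-k-1}$ in the corresponding coordinate.

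Next I would dualize: for each $j$ pick a test function $H_{j}\in\textup{L}^{p'}(\mathbb{R}_{+}^{2})$ normalized so that $\sum_{j}\|H_{j}\|_{\textup{L}^{p'}}^{p'}\leq 1$, and reduce the desired inequality to a bound on the trilinear form $\sum_{j=0}^{m-1}\langle\textup{A}_{k_{j+1}}(F,G)-\textup{A}_{k_{j}}(F,G),\,H_{j}\rangle$. Applying the scale recursion at each intermediate level between $k_{j}$ and $k_{j+1}$ and regrouping, this form becomes a sum of localized ``twisted-paraproduct'' pieces indexed by dyadic squares of $\mathbb{R}_{+}^{2}$ at all scales, of the type controlled in \cite{KovacThiele13}.

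The heart of the argument is then a Bellman-function (equivalently, positive-kernel telescoping) estimate that bounds the sum of all these localized pieces. One constructs a function $B$ of local $\textup{L}^{2p}$ averages of $F$ and $G$ and of local $\textup{L}^{p'}$ averages of (an appropriate combination of) the $H_{j}$'s, arranged so that its discrete Laplacian across one dyadic refinement dominates the absolute value of the corresponding localized form pointwise. Telescoping $B$ across all dyadic refinements and all scales controls the total by $B$ evaluated at the coarsest scale, which in turn is bounded by $\|F\|_{\textup{L}^{2p}}\|G\|_{\textup{L}^{2p}}\big(\sum_{j}\|H_{j}\|_{\textup{L}^{p'}}^{p'}\big)^{1/p'}$. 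Taking the supremum over admissible $(H_{j})$ and raising to the $p$-th power yields the claim.

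The main obstacle is the genuinely variational nature of the statement: the $H_{j}$'s depend on an arbitrary subsequence $k_{0}<\cdots<k_{m}$ chosen adversarially, and the constant must be independent of $m$. This rules out a naive application of the existing single-scale bounds and forces the Bellman function to be engineered so as to handle the ``stopping-time'' structure linking each scale $k$ to the unique index $j$ with $k_{j}\leq k<k_{j+1}$. I expect absorbing this dependence into the monotonicity of $B$, while still closing with the $\textup{L}^{2p}$ norms of $F$ and $G$ on the right-hand side, to be the only genuinely new technical step beyond the template of \cite{KovacThiele13}.
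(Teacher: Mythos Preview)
Your approach is in the right family --- telescoping over scales, Haar/martingale differences, and the entangled Cauchy--Schwarz technology of \cite{Kovac11,Kovac12,KovacThiele13} --- but it diverges from the paper at the very first move, and the divergence is exactly where your acknowledged ``main obstacle'' lies.

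The paper does \emph{not} dualize. Instead it uses an elementary pointwise inequality (Lemma~\ref{diffestimatelemma}): for $p\geq 2$ and real $a,b$,
\[
|a|^{p}-|b|^{p}-p(a-b)\,b|b|^{p-2}\ \geq\ c_{p}\,|a-b|^{p}.
\]
Applying this with $a=\textup{A}_{k_{j+1}}(F,G)(x,y)$, $b=\textup{A}_{k_{j}}(F,G)(x,y)$ (after reducing to $F,G\geq 0$ and integer $p$) and summing in $j$ telescopes the $p$-th powers and leaves a single $(p+2)$-linear form
\[
\Lambda(F,G)=\sum_{j}\int \big(\textup{A}_{k_{j+1}}-\textup{A}_{k_{j}}\big)\,\textup{A}_{k_{j}}^{\,p-1}
\]
with \emph{no} auxiliary test functions $H_{j}$. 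After a change of variables $z_{i}=x\oplus y\oplus t_{i}$ and a Haar expansion, one Cauchy--Schwarz in the $x,y$ integrals separates $F$ from $G$, and the resulting form $\Theta(\widetilde{F})$ is bounded by $\|\widetilde{F}\|_{\textup{L}^{2p}}^{2p}$ via a summation-by-parts identity plus Jensen (Lemma~\ref{auxineqlemma}). General $p\geq 2$ follows by complex interpolation between integer exponents.

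The point is that the elementary inequality absorbs the variational structure \emph{before} any harmonic analysis begins: the adversarial choice of $k_{0}<\cdots<k_{m}$ is neutralized by the telescoping of $\textup{A}_{k_{j}}^{p}$, and what remains is a fixed form with the same cancellation as a twisted paraproduct. Your plan instead carries the stopping-time dependence all the way into the Bellman function, where you would need $B$ to track local data of a scale-dependent function $H_{j(k)}$ and still telescope uniformly in $m$. You correctly flag this as the new step, but you do not indicate how to build such a $B$; the templates in \cite{Kovac11,KovacThiele13} have a fixed third function at every scale, and the monotonicity there does not obviously survive replacing it by a scale-varying one subject only to an $\ell^{p'}(\textup{L}^{p'})$ constraint. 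As written, the proposal identifies the obstacle without resolving it, whereas the paper's route simply sidesteps it.
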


The rest of this section is devoted to the proof of Proposition \ref{bilinproprestated}.
We begin with a very elementary inequality for real numbers.

\begin{lemma}\label{diffestimatelemma}
For any $2\leq p<\infty$ there exists a constant $c_p>0$ such that the inequality
$$ |a|^p - |b|^p - p (a-b) b |b|^{p-2} \geq c_p |a-b|^p $$
holds for any $a,b\in\mathbb{R}$.
\end{lemma}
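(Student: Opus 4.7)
The plan is to identify the left-hand side as the first-order Taylor remainder of the convex function $f(t) := |t|^p$ at the point $b$, and then exploit scaling and compactness. For $p \geq 2$ the function $f$ has continuous first derivative $f'(t) = p\, t\, |t|^{p-2}$ and a nonnegative, locally integrable second derivative $f''(t) = p(p-1)|t|^{p-2}$. Taylor's theorem with integral remainder therefore yields
\[
|a|^p - |b|^p - p(a-b)\,b\,|b|^{p-2} \;=\; p(p-1)(a-b)^2 \int_0^1 (1-s)\,|b + s(a-b)|^{p-2}\,ds,
\]
which is manifestly nonnegative and vanishes exactly when $a = b$. In particular the inequality is trivial in the latter case.

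Next I would exploit two symmetries to reduce to a one-variable problem. Both sides of the desired inequality are positively $p$-homogeneous in $(a,b)$, and both are invariant under the sign-flip $(a,b) \mapsto (-a,-b)$. Normalizing to $a - b = 1$, the claim becomes the existence of $c_p > 0$ such that
\[
\Phi(b) \;:=\; p(p-1) \int_0^1 (1-s)\,|b + s|^{p-2}\,ds \;\geq\; c_p \qquad \text{for every } b \in \mathbb{R}.
\]

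The function $\Phi$ is continuous on $\mathbb{R}$, strictly positive at every point (its integrand vanishes on at most a single value of $s \in [0,1]$), and tends to $+\infty$ like $\tfrac{p(p-1)}{2}|b|^{p-2}$ as $|b| \to \infty$ when $p > 2$ (and $\Phi \equiv 1$ when $p = 2$). Hence $\Phi$ attains a positive minimum on $\mathbb{R}$, which serves as the constant $c_p$. There is no serious obstacle in this argument; the only step that deserves a moment's care is verifying the integral-remainder form of Taylor's formula for non-integer $p \geq 2$, but this follows at once from the fact that $f'$ is absolutely continuous on $\mathbb{R}$ with almost-everywhere derivative $p(p-1)|t|^{p-2}$.
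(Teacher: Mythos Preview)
Your argument is correct. Both your proof and the paper's proof follow the same overall scheme: exploit $p$-homogeneity to reduce the two-variable inequality to the positivity of a one-variable continuous function, verify that function is strictly positive, check its behaviour at the endpoints of its domain, and invoke compactness to extract a positive lower bound $c_p$. The difference lies in the normalization and in the tool used for positivity. The paper divides through by $|b|^p$ and sets $t=(a-b)/b$, obtaining $\theta(t)=\bigl(|1+t|^p-1-pt\bigr)/|t|^p$; strict positivity of $\theta$ is then argued via Bernoulli's inequality, and the limits at $0$ and $\pm\infty$ are computed with L'H\^{o}pital. You instead normalize $a-b=1$ and write the left-hand side as the integral Taylor remainder $p(p-1)\int_0^1(1-s)|b+s|^{p-2}\,ds$, which makes both the nonnegativity and the asymptotics $\sim\tfrac{p(p-1)}{2}|b|^{p-2}$ immediate without any auxiliary inequality. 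Your route is slightly more conceptual (it would generalize to any $C^{1,1}$ convex function with appropriate growth of the second derivative), while the paper's is marginally more elementary in that it avoids the integral remainder formula; in practical terms the two arguments are of comparable length and difficulty.
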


\begin{proof}
When either $b=0$ or $a=b$ or $p=2$, the inequality is obvious and becomes an identity with the constant $c_p$ simply being equal to $1$.
Therefore, in the following we assume that $a\neq b\neq 0$ and $p>2$.
Dividing by $|b|^p$ and substituting $t=\frac{a-b}{b}\neq 0$ the inequality transforms into
$$ |1+t|^p - 1 - pt \geq c_p |t|^p . $$

Define a function $\theta\colon\mathbb{R}\setminus\{0\}\to\mathbb{R}$ by the formula
$$ \theta(t) := \frac{|1+t|^p - 1 - pt}{|t|^p} . $$
Observe that $\theta(t)>0$ for every $t\neq 0$. Indeed, for $t>-1$, $t\neq 0$ this follows from Bernoulli's inequality,
$$ (1+t)^p > 1 + pt , $$
while for $t\leq -1$ this is entirely evident,
$$ |1+t|^p - 1 - pt \geq -1 + p > 0 . $$
Next, an easy application of L'H\^{o}pital's rule gives
$$ \lim_{t\to 0}\theta(t)=+\infty \quad\text{and}\quad \lim_{t\to\pm\infty}\theta(t)=1 . $$
Finally, note that $\theta$ is continuous on $\mathbb{R}\setminus\{0\}$.
From these properties it readily follows that $\theta$ is bounded from below by some constant $c_p>0$.
\end{proof}

Let us fix a positive integer $m$ and arbitrary integers $k_0<k_1<\ldots<k_m$.
It will be convenient to first work with integer values of $p$ and later invoke an interpolation argument.
Thus, suppose that $p\geq 2$ is a fixed integer.
Without loss of generality we can assume $F,G\geq 0$, as otherwise we split the functions into their real and complex, positive and negative parts.
Consequently, $\textup{A}_{k}(F,G)\geq 0$ for each $k$.

Denote
$$ \varphi_k := d^{k}\mathbf{1}_{[0,d^{-k})} $$
for any $k\in\mathbb{Z}$, where $\mathbf{1}_E$ denotes the characteristic function of a set $E$, so that
$$ \textup{A}_{k}(F,G)(x,y) := \int_{\mathbb{R}_+}\! F(x\oplus t,y) G(x,y\oplus t) \varphi_k(t) dt . $$
Applying Lemma \ref{diffestimatelemma} with
$$ a=\textup{A}_{k_{j+1}}(F,G)(x,y)\geq 0, \quad b=\textup{A}_{k_{j}}(F,G)(x,y)\geq 0 , $$
summing over $j=0,1,\ldots,m-1$, and telescoping the cancelling terms, we get
\begin{align*}
& \sum_{j=0}^{m-1}\Big|\textup{A}_{k_{j+1}}(F,G)(x,y)-\textup{A}_{k_{j}}(F,G)(x,y)\Big|^p \\
& \leq c_p^{-1} \bigg(\textup{A}_{k_{m}}(F,G)(x,y)^p - \textup{A}_{k_{0}}(F,G)(x,y)^p \\
& \qquad\quad - p \,\sum_{j=0}^{m-1} \Big(\textup{A}_{k_{j+1}}(F,G)(x,y)-\textup{A}_{k_{j}}(F,G)(x,y)\Big)\, \textup{A}_{k_{j}}(F,G)(x,y)^{p-1}\bigg) .
\end{align*}
Integrating in $(x,y)$ over $\mathbb{R}_{+}^{2}$ we conclude that the left hand side of the estimate in Proposition \ref{bilinproprestated}
is at most a constant depending on $p$ times
$$ \|\textup{A}_{k_{m}}(F,G)\|^{p}_{\textup{L}^{p}(\mathbb{R}_{+}^{2})} \,+\, |\Lambda(F,G)| , $$
where we have denoted
\begin{align*}
\Lambda(F,G) & := \sum_{j=0}^{m-1} \int_{\mathbb{R}_{+}^{2}}
\Big(\textup{A}_{k_{j+1}}(F,G)(x,y)-\textup{A}_{k_{j}}(F,G)(x,y)\Big)\, \textup{A}_{k_{j}}(F,G)(x,y)^{p-1} dx dy \\
& = \sum_{j=0}^{m-1} \int_{\mathbb{R}_{+}^{p+2}}
F(x\oplus t_1,y) \cdots F(x\oplus t_p,y) \,G(x,y\oplus t_1) \cdots G(x,y\oplus t_p) \\[-2mm]
& \qquad\qquad\qquad\ \big(\varphi_{k_{j+1}}(t_1)-\varphi_{k_j}(t_1)\big)\, \varphi_{k_j}(t_2) \cdots \varphi_{k_j}(t_p)
\,dx dy dt_1 \cdots dt_p .
\end{align*}
Since by the integral Minkowski and H\"{o}lder inequalities we have
\begin{align*}
\|\textup{A}_{k_{m}}(F,G)\|_{\textup{L}^{p}(\mathbb{R}_{+}^{2})}
& \leq \int_{\mathbb{R}_+} \|F(x\oplus t,y) \,G(x,y\oplus t)\|_{\textup{L}^{p}_{(x,y)}(\mathbb{R}_{+}^{2})} \,\varphi_{k_m}(t) dt \\
& \leq \int_{\mathbb{R}_+} \|F\|_{\textup{L}^{2p}(\mathbb{R}_{+}^{2})} \|G\|_{\textup{L}^{2p}(\mathbb{R}_{+}^{2})} \,\varphi_{k_m}(t) dt
= \|F\|_{\textup{L}^{2p}(\mathbb{R}_{+}^{2})} \|G\|_{\textup{L}^{2p}(\mathbb{R}_{+}^{2})} ,
\end{align*}
it is enough to show
\begin{equation}\label{eqsufficientlambda}
|\Lambda(F,G)| \leq \|F\|^{p}_{\textup{L}^{2p}(\mathbb{R}_{+}^{2})} \|G\|^{p}_{\textup{L}^{2p}(\mathbb{R}_{+}^{2})} .
\end{equation}

For the rest of the proof we need to introduce appropriate Haar functions.
Let
$$ \xi_0, \xi_1, \xi_2, \ldots, \xi_{d-1} \colon\mathbb{A}\to\textup{S}^{1}=\{z\in\mathbb{C}:|z|=1\} $$
be all characters of the abelian group $\mathbb{A}$, enumerated arbitrarily except for $\xi_0$ being constantly equal to $1$.
We first define $\mathbf{h}_{[0,1)}^{s}$ for any $s\in\{0,1,\ldots,d-1\}$ as
$$ \mathbf{h}_{[0,1)}^{s} := \sum_{i=0}^{d-1} \xi_s(i) \mathbf{1}_{[i/d,(i+1)/d)} $$
and then for any \emph{$d$-adic interval} $I=[d^{-k}l,d^{-k}(l+1))\subseteq\mathbb{R}_{+}$, $k,l\in\mathbb{Z}$, $l\geq 0$, we set
$$ \mathbf{h}_{I}^{s}(t) := \mathbf{h}_{[0,1)}^{s}(d^{k}t - l) . $$
An equivalent definition is
$$ \mathbf{h}_{I}^{s}(t) := \left\{\begin{array}{cl} \xi_s(a_{-k-1}) & \text{if } t\in I,\\
0 & \text{if } t\not\in I,\end{array}\right. $$
where $t=\sum_{j\in\mathbb{Z}}a_j d^j$ and $I$ is as before.
Note that in particular $|\mathbf{h}_{I}^{s}|=\mathbf{1}_{I}$ and also $\mathbf{h}_{I}^{0}=\mathbf{1}_{I}$.
The system $(\mathbf{h}_{I}^{s})_{I,s\neq 0}$ is a variant of the \emph{Haar system}.

The collection of all $d$-adic intervals inside $\mathbb{R}_+$ will be denoted by $\mathcal{I}$
and the length of some $I\in\mathcal{I}$ will simply be written as $|I|$.
If $I,J\in\mathcal{I}$ are intervals of the same length,
$$ I=[d^{-k}l_1,d^{-k}(l_1+1)), \ \ J=[d^{-k}l_2,d^{-k}(l_2+1)), \ \ k,l_1,l_2\in\mathbb{Z}, \ l_1,l_2\geq 0 , $$
then $I\oplus J$ and $\ominus I$ will denote the new intervals
$$ I\oplus J := \{x\oplus y : x\in I, y\in J\} = \big[d^{-k}(l_1\oplus l_2),d^{-k}((l_1\oplus l_2)+1)\big) $$
and
$$ \ominus I := \{\ominus x : x\in I\} = \big[d^{-k}(\ominus l_1),d^{-k}(\ominus l_1+1)\big) , $$
again of length $d^{-k}$.
Simply from the character property of $\xi_{s}$ we obtain
\begin{equation}\label{eqcharproperty}
\mathbf{h}_{I\oplus J}^{s}(x\oplus y) = \mathbf{h}_{I}^{s}(x) \mathbf{h}_{J}^{s}(y)
\ \text{ and }\ \mathbf{h}_{\ominus I}^{s}(\ominus x) = \overline{\mathbf{h}_{I}^{s}(x)}
\end{equation}
whenever $I,J\in\mathcal{I}$, $|I|=|J|$, \,$x\in I$, $y\in J$, \,$s\in\{0,1,\ldots,d-1\}$.
For $I\in\mathcal{I}$ and a nonnegative integer $N$ we will denote by $d^{N}I$ the unique interval $I'\in\mathcal{I}$ such that
$|I'|=d^{N}|I|$ and $I'\supseteq I$.
Each interval $I\in\mathcal{I}$ can be partitioned into $d$ intervals in $\mathcal{I}$ of size $d^{-1}|I|$,
which can be called the ``children'' of $I$.
In this terminology we can call $d^{N}I$ the ``ancestor'' of $I$ from $N$ generations back.

Observe that
$$ \mathbf{1}_{[0,d^{-k})} + \sum_{s=1}^{d-1}\mathbf{h}^{s}_{[0,d^{-k})}
= \sum_{s=0}^{d-1} \sum_{i=0}^{d-1} \xi_s(i) \mathbf{1}_{[d^{-k-1}i,d^{-k-1}(i+1))}
= d\,\mathbf{1}_{[0,d^{-k-1})} , $$
so we can decompose
$$ \varphi_{k+1} - \varphi_{k} = d^{k+1}\mathbf{1}_{[0,d^{-k-1})} - d^k\mathbf{1}_{[0,d^{-k})} = \sum_{s=1}^{d-1}d^k\mathbf{h}^{s}_{[0,d^{-k})}  $$
and then also write
\begin{equation}\label{eqtelescopingphi}
\varphi_{k_{j+1}} - \varphi_{k_j} = \sum_{r=k_j}^{k_{j+1}-1} \sum_{s=1}^{d-1} d^r\mathbf{h}^{s}_{[0,d^{-r})} .
\end{equation}
We substitute
$$ z_i = x \oplus y \oplus t_i, \quad \widetilde{F}(z,y) := F(z\ominus y,y), \quad \widetilde{G}(z,x) := G(x,z\ominus x) $$
in the last expression for $\Lambda(F,G)$, so that
$$ t_i=z_i\ominus x\ominus y, \quad F(x\oplus t_i,y):=\widetilde{F}(z_i,y), \quad G(x,y\oplus t_i):=\widetilde{G}(z_i,x) $$
and using \eqref{eqtelescopingphi} the form $\Lambda$ transforms into
\begin{align*}
& \Lambda(F,G) = \widetilde{\Lambda}(\widetilde{F},\widetilde{G}) \\
& = \sum_{j=0}^{m-1} \sum_{r=k_j}^{k_{j+1}-1} \sum_{s=1}^{d-1} \,\int_{\mathbb{R}_{+}^{p+2}}
\widetilde{F}(z_1,y) \cdots \widetilde{F}(z_p,y) \,\widetilde{G}(z_1,x) \cdots \widetilde{G}(z_p,x) \,d^{r+(p-1)k_j} \\
& \quad\ \mathbf{h}^{s}_{[0,d^{-r})}(z_1\!\ominus\! x\!\ominus\! y)
\mathbf{1}_{[0,d^{-k_j})}(z_2\!\ominus\! x\!\ominus\! y) \cdots \mathbf{1}_{[0,d^{-k_j})}(z_p\!\ominus\! x\!\ominus\! y)
\,dx dy dz_1 \cdots dz_p .
\end{align*}
Splitting the integrals in $x$ and $y$ by inserting $\mathbf{1}_I(x)\mathbf{1}_J(y)$ for any $d$-adic intervals $I,J$ of length $d^{-r}$
and using the character property \eqref{eqcharproperty} we obtain
\begin{align*}
\widetilde{\Lambda}(\widetilde{F},\widetilde{G}) & = \sum_{j=0}^{m-1} \sum_{r=k_j}^{k_{j+1}-1} \sum_{s=1}^{d-1}
\sum_{\substack{I,J\in\mathcal{I},\ |I|=|J|=d^{-r}\\ K:=I\oplus J,\ L:=d^{r-k_j}K}} d^{r+(p-1)k_j} \\
& \quad\ \int_{\mathbb{R}_{+}^{p+2}}
\widetilde{F}(z_1,y) \cdots \widetilde{F}(z_p,y) \,\widetilde{G}(z_1,x) \cdots \widetilde{G}(z_p,x) \\[-1mm]
& \qquad\qquad \overline{\mathbf{h}^{s}_{I}(x) \mathbf{h}^{s}_{J}(y)} \mathbf{h}^{s}_{K}(z_1)
\mathbf{1}_{L}(z_2)\cdots\mathbf{1}_{L}(z_p) \,dx dy dz_1 \cdots dz_p .
\end{align*}

The form $\widetilde{\Lambda}$ is reminiscent of \emph{entangled multilinear dyadic Calder\'{o}n-Zygmund operators}
studied in \cite{KovacThiele13}.
The basic idea for proving bounds for such operators is based on repeated applications of the Cauchy-Schwarz inequality and certain telescoping identities.
We adapt the same approach here.
Actually, for $\widetilde{\Lambda}$ this procedure will terminate after only one step and yield Estimate \eqref{eqsufficientlambda}.
We provide details of that argument, in order to keep the exposition self-contained and to explain the necessary modifications.

Taking absolute values,
\begin{align*}
|\widetilde{\Lambda}(\widetilde{F},\widetilde{G})| & \leq \sum_{j=0}^{m-1} \sum_{r=k_j}^{k_{j+1}-1} \sum_{s=1}^{d-1}
\sum_{\substack{I,J\in\mathcal{I},\ |I|=|J|=d^{-r}\\ K:=I\oplus J,\ L:=d^{r-k_j}K}} d^{r+(p-1)k_j} \\
& \quad\int_{\mathbb{R}_{+}^{p}}
\Big|\int_{\mathbb{R}_{+}}\widetilde{F}(z_1,y) \cdots \widetilde{F}(z_p,y) \overline{\mathbf{h}^{s}_{J}(y)} dy\Big|
\Big|\int_{\mathbb{R}_{+}}\widetilde{G}(z_1,x) \cdots \widetilde{G}(z_p,x) \overline{\mathbf{h}^{s}_{I}(x)} dx\Big| \\
& \qquad\qquad\qquad\qquad\qquad\qquad\qquad\qquad \mathbf{1}_{K}(z_1) \mathbf{1}_{L}(z_2)\cdots\mathbf{1}_{L}(z_p)
\,dz_1 dz_2 \cdots dz_p ,
\end{align*}
and then applying the Cauchy-Schwarz inequality we get
\begin{equation}\label{eqlambdacsb}
|\widetilde{\Lambda}(\widetilde{F},\widetilde{G})| \leq \Theta(\widetilde{F})^{1/2} \,\Theta(\widetilde{G})^{1/2},
\end{equation}
where we have denoted
\begin{align*}
\Theta(\widetilde{F}) & := \sum_{j=0}^{m-1} \sum_{r=k_j}^{k_{j+1}-1} \sum_{s=1}^{d-1}
\sum_{\substack{J,K\in\mathcal{I},\ |J|=|K|=d^{-r}\\ L:=d^{r-k_j}K}} d^{r+(p-1)k_j} \\
& \quad \int_{\mathbb{R}_{+}^{p}}
\Big|\int_{\mathbb{R}_{+}}\widetilde{F}(z_1,y) \cdots \widetilde{F}(z_p,y) \overline{\mathbf{h}^{s}_{J}(y)} dy\Big|^2
\mathbf{1}_{K}(z_1) \mathbf{1}_{L}(z_2)\cdots\mathbf{1}_{L}(z_p)
\,dz_1 dz_2 \cdots dz_p .
\end{align*}
Here we used the fact that if three intervals $I,J,K$ are related by the equality $I\oplus J=K$, then any two of them uniquely determine the third one.
Moreover, summing $\mathbf{1}_{K}(z_1)$ over all $K$ such that $d^{r-k_j}K=L$ for a fixed $L$ we get a slightly simpler expression,
\begin{align*}
\Theta(\widetilde{F}) & = \sum_{j=0}^{m-1} \sum_{r=k_j}^{k_{j+1}-1} \sum_{s=1}^{d-1}
\sum_{\substack{J,L\in\mathcal{I}\\ |J|=d^{-r},\,|L|=d^{-k_j}}} d^{r+(p-1)k_j} \\
& \quad\int_{\mathbb{R}_{+}^{p}}
\Big|\int_{\mathbb{R}_{+}}\widetilde{F}(z_1,y) \cdots \widetilde{F}(z_p,y) \overline{\mathbf{h}^{s}_{J}(y)} dy\Big|^2
\mathbf{1}_{L}(z_1) \mathbf{1}_{L}(z_2)\cdots\mathbf{1}_{L}(z_p)
\,dz_1 dz_2 \cdots dz_p .
\end{align*}
Because of $\|F\|_{\textup{L}^{2p}}=\|\widetilde{F}\|_{\textup{L}^{2p}}$,
$\|G\|_{\textup{L}^{2p}}=\|\widetilde{G}\|_{\textup{L}^{2p}}$, and \eqref{eqlambdacsb},
it is sufficient to verify
\begin{equation}\label{eqsufficienttheta}
\Theta(\widetilde{F}) \leq \|\widetilde{F}\|^{2p}_{\textup{L}^{2p}(\mathbb{R}_{+}^{2})}
\end{equation}
in order to prove \eqref{eqsufficientlambda}.

Let us expand out the last expression for $\Theta(\widetilde{F})$,
\begin{align*}
\Theta(\widetilde{F}) & = \sum_{j=0}^{m-1} \sum_{r=k_j}^{k_{j+1}-1} \sum_{s=1}^{d-1}
\sum_{\substack{J,L\in\mathcal{I}\\ |J|=d^{-r},\,|L|=d^{-k_j}}} d^{r+(p-1)k_j} \\
& \quad\ \int_{\mathbb{R}_{+}^{p+2}}
\widetilde{F}(z_1,y_1) \cdots \widetilde{F}(z_p,y_1) \,\widetilde{F}(z_1,y_2) \cdots \widetilde{F}(z_p,y_2) \\[-1mm]
& \qquad\qquad \mathbf{h}^{s}_{I}(y_1) \overline{\mathbf{h}^{s}_{J}(y_2)}
\mathbf{1}_{L}(z_1)\cdots\mathbf{1}_{L}(z_p) \,dy_1 dy_2 dz_1 \cdots dz_p .
\end{align*}
Using \eqref{eqcharproperty} and \eqref{eqtelescopingphi} once again we obtain
\begin{align*}
\sum_{r=k_j}^{k_{j+1}-1} \sum_{s=1}^{d-1}
\sum_{\substack{J\in\mathcal{I}\\ |J|=d^{-r}}} d^{r} \mathbf{h}^{s}_{J}(y_1) \overline{\mathbf{h}^{s}_{J}(y_2)}
& = \sum_{r=k_j}^{k_{j+1}-1} \sum_{s=1}^{d-1} d^{r} \mathbf{h}^{s}_{[0,d^{-r})}(y_1\!\ominus\!y_2) \\[-1mm]
& = \varphi_{k_{j+1}}(y_1\!\ominus\!y_2) - \varphi_{k_{j}}(y_1\!\ominus\!y_2) ,
\end{align*}
so that $\Theta(\widetilde{F})$ can be rewritten as
\begin{align*}
\Theta(\widetilde{F}) & = \sum_{j=0}^{m-1} \int_{\mathbb{R}_{+}^{p+2}}
\widetilde{F}(z_1,y_1) \cdots \widetilde{F}(z_p,y_1) \,\widetilde{F}(z_1,y_2) \cdots \widetilde{F}(z_p,y_2) \\[-1.5mm]
& \qquad\qquad\quad \big(\varphi_{k_{j+1}}(y_1\!\ominus\!y_2) - \varphi_{k_{j}}(y_1\!\ominus\!y_2)\big)
\,\vartheta_{k_j}(z_1,\ldots,z_p) \,dy_1 dy_2 dz_1 \cdots dz_p ,
\end{align*}
where
$$ \vartheta_{k}(z_1,\ldots,z_p) := \sum_{\substack{L\in\mathcal{I}\\ |L|=d^{-k}}} d^{(p-1)k}
\mathbf{1}_{L}(z_1) \mathbf{1}_{L}(z_2)\cdots\mathbf{1}_{L}(z_p) . $$
We introduce a ``complementary'' form
\begin{align*}
\Theta'(\widetilde{F}) & := \sum_{j=0}^{m-1} \int_{\mathbb{R}_{+}^{p+2}}
\widetilde{F}(z_1,y_1) \cdots \widetilde{F}(z_p,y_1) \,\widetilde{F}(z_1,y_2) \cdots \widetilde{F}(z_p,y_2) \\[-1.5mm]
& \qquad\qquad\quad \varphi_{k_{j+1}}(y_1\!\ominus\!y_2)
\,\big(\vartheta_{k_{j+1}}(z_1,\ldots,z_p)-\vartheta_{k_{j}}(z_1,\ldots,z_p)\big) \,dy_1 dy_2 dz_1 \cdots dz_p
\end{align*}
and also for any $k\in\mathbb{Z}$ we set
\begin{align*}
\Xi_{k}(\widetilde{F}) & = \int_{\mathbb{R}_{+}^{p+2}}
\widetilde{F}(z_1,y_1) \cdots \widetilde{F}(z_p,y_1) \,\widetilde{F}(z_1,y_2) \cdots \widetilde{F}(z_p,y_2) \\[-1mm]
& \qquad\qquad \varphi_{k}(y_1\!\ominus\!y_2) \,\vartheta_{k}(z_1,\ldots,z_p) \,dy_1 dy_2 dz_1 \cdots dz_p .
\end{align*}

The following identity and inequalities are similar to the ones used in \cite[\S 2]{Kovac11} and \cite[\S 3]{Kovac12}.
\begin{lemma}\label{auxineqlemma}
\begin{itemize}
\item[(a)] $\Theta(\widetilde{F}) + \Theta'(\widetilde{F}) = \Xi_{k_m}(\widetilde{F}) - \Xi_{k_0}(\widetilde{F})$,
\item[(b)] $\Theta'(\widetilde{F})\geq 0$,
\item[(c)] $0\leq \Xi_{k}(\widetilde{F})\leq \|\widetilde{F}\|^{2p}_{\textup{L}^{2p}(\mathbb{R}_{+}^{2})}$ for any $k\in\mathbb{Z}$.
\end{itemize}
\end{lemma}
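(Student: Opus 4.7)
My plan is to unify parts (b) and (c) through a single ``conditional expectation'' identity on the Cantor group. Write $\mathbf{E}_k$ for the martingale projection onto the $\sigma$-algebra generated by $d$-adic intervals of length $d^{-k}$. A direct computation shows that for any nonnegative $u\colon\mathbb{R}_+\to\mathbb{R}$,
\begin{equation*}
\int_{\mathbb{R}_+^p}\vartheta_k(z_1,\ldots,z_p)\prod_{i=1}^{p}u(z_i)\,dz_1\cdots dz_p
\;=\; \sum_{|L|=d^{-k}} d^{(p-1)k}\Big(\!\int_L u\,dz\Big)^{\!p}
\;=\; \|\mathbf{E}_k u\|_{\textup{L}^p(\mathbb{R}_+)}^{p},
\end{equation*}
because on each such $L$ one has $\int_L u = |L|\,\mathbf{E}_k u$ and $d^{(p-1)k}|L|^p = |L|$. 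Analogously, $\int\varphi_k(y_1\ominus y_2)\,v(y_1)w(y_2)\,dy_1 dy_2 = \int(\mathbf{E}_k v)(\mathbf{E}_k w)\,dy$. These two identities will carry the entire proof.

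Part (a) is purely algebraic: the integrands of $\Theta(\widetilde F)$ and $\Theta'(\widetilde F)$ add to $(\varphi_{k_{j+1}}-\varphi_{k_j})\vartheta_{k_j} + \varphi_{k_{j+1}}(\vartheta_{k_{j+1}}-\vartheta_{k_j}) = \varphi_{k_{j+1}}\vartheta_{k_{j+1}} - \varphi_{k_j}\vartheta_{k_j}$, and summing over $j=0,\ldots,m-1$ telescopes to $\Xi_{k_m}(\widetilde F) - \Xi_{k_0}(\widetilde F)$.

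For part (c), set $u_{y_1,y_2}(z) := \widetilde F(z,y_1)\widetilde F(z,y_2)\geq 0$. The $\vartheta_k$-identity gives $\Xi_k(\widetilde F) = \int \varphi_k(y_1\ominus y_2)\,\|\mathbf{E}_k u_{y_1,y_2}\|_{\textup{L}^p_z}^{p}\,dy_1 dy_2$, from which nonnegativity is immediate. For the upper bound I would use $\textup{L}^p$-contractivity of $\mathbf{E}_k$ to replace $\|\mathbf{E}_k u_{y_1,y_2}\|_{\textup{L}^p_z}^{p}$ by $\int \widetilde F(z,y_1)^p\widetilde F(z,y_2)^p\,dz$, swap the order of integration, and apply the $\varphi_k$-identity with $v(y)=w(y)=\widetilde F(z,y)^p$ together with $\textup{L}^2$-contractivity of $\mathbf{E}_k$, yielding $\int (\mathbf{E}_k v)^2\,dy \leq \int v^2\,dy = \int \widetilde F(z,y)^{2p}\,dy$. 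Integrating in $z$ produces exactly $\|\widetilde F\|_{\textup{L}^{2p}}^{2p}$.

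For part (b), insert the $\vartheta_k$-identity into $\Theta'(\widetilde F)$ to obtain
\begin{equation*}
\Theta'(\widetilde F) = \sum_{j=0}^{m-1}\int_{\mathbb{R}_+^2}\varphi_{k_{j+1}}(y_1\ominus y_2)\Big(\|\mathbf{E}_{k_{j+1}}u_{y_1,y_2}\|_{\textup{L}^p_z}^{p} - \|\mathbf{E}_{k_j}u_{y_1,y_2}\|_{\textup{L}^p_z}^{p}\Big)dy_1 dy_2 .
\end{equation*}
Each summand is nonnegative because $u_{y_1,y_2}\geq 0$ and $k\mapsto \|\mathbf{E}_k u\|_{\textup{L}^p}$ is nondecreasing: since $k_j<k_{j+1}$, the tower relation $\mathbf{E}_{k_j} = \mathbf{E}_{k_j}\mathbf{E}_{k_{j+1}}$ combined with Jensen's inequality (equivalently, $\textup{L}^p$-contractivity of $\mathbf{E}_{k_j}$) delivers the monotonicity. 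The only conceptual hurdle in the entire lemma is recognizing the hidden conditional-expectation structure inside $\vartheta_k$ and $\varphi_k$; once that is in place, each of the three parts falls out with essentially no further technical work.
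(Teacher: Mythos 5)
Your proposal is correct and is essentially the paper's argument repackaged in the language of martingale projections: the telescoping in (a) is identical, the monotonicity of $k\mapsto\|\mathbf{E}_k u\|_{\textup{L}^p}$ in (b) is exactly the paper's Jensen/power-mean inequality $N^{p-1}\sum_i b_{L_i}^p\geq\big(\sum_i b_{L_i}\big)^p$ applied to $b_{L_i}=\int_{L_i}\widetilde{F}(z,y_1)\widetilde{F}(z,y_2)\,dz$, and the two contractivity steps in (c) correspond to the paper's Cauchy--Schwarz and H\"{o}lder applications. The conditional-expectation identities you isolate for $\vartheta_k$ and $\varphi_k(y_1\ominus y_2)$ are a clean way to organize the same computation, and all steps check out (using, as you note, $\widetilde{F}\geq 0$ so that the $p$-th powers carry no sign issues).
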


\begin{proof}
Identity (a) follows immediately from the the summation by parts formula,
\begin{align*}
& \sum_{j=0}^{m-1}\big(\varphi_{k_{j+1}}(y_1\!\ominus\!y_2) - \varphi_{k_{j}}(y_1\!\ominus\!y_2)\big)\vartheta_{k_j}(z_1,\ldots,z_p) \\
& + \sum_{j=0}^{m-1}\varphi_{k_{j+1}}(y_1\!\ominus\!y_2)\big(\vartheta_{k_{j+1}}(z_1,\ldots,z_p)-\vartheta_{k_{j}}(z_1,\ldots,z_p)\big) \\
& = \varphi_{k_m}(y_1\!\ominus\!y_2)\vartheta_{k_m}(z_1,\ldots,z_p) - \varphi_{k_0}(y_1\!\ominus\!y_2)\vartheta_{k_0}(z_1,\ldots,z_p) .
\end{align*}

In order to prove (b) we rewrite $\Theta'(\widetilde{F})$ as
\begin{align*}
\Theta'(\widetilde{F}) & := \sum_{j=0}^{m-1} \int_{\mathbb{R}_{+}^{2}}
\bigg(d^{(p-1)k_{j+1}}\!\!\!\!\sum_{\substack{L'\in\mathcal{I}\\ |L'|=d^{-k_{j+1}}}}\!\!\Big(\int_{L'}\widetilde{F}(z,y_1)\widetilde{F}(z,y_2)dz\Big)^p \\
& \qquad\qquad\qquad
- d^{(p-1)k_j}\!\!\!\sum_{\substack{L\in\mathcal{I}\\ |L|=d^{-k_j}}}\!\Big(\int_{L}\widetilde{F}(z,y_1)\widetilde{F}(z,y_2)dz\Big)^p\bigg)
\varphi_{k_{j+1}}(y_1\!\ominus\!y_2)\,dy_1 dy_2 .
\end{align*}
For any $I\in\mathcal{I}$ we denote
$$ b_I := \int_{I}\widetilde{F}(z,y_1)\widetilde{F}(z,y_2)dz . $$
Recall that we are assuming $F\geq 0$ and hence also $\widetilde{F}\geq 0$, which implies $b_I\geq 0$.
It suffices to show that for any fixed interval $L\in\mathcal{I}$ we have
$$ N^{p-1}\sum_{i=1}^{N}b_{L_i}^p - \Big(\sum_{i=1}^{N}b_{L_i}\Big)^p \geq 0 , $$
where $N=d^{k_{j+1}-k_j}$ and $L_1,L_2,\ldots,L_N$ is the list of all intervals $L'\in\mathcal{I}$ such that $d^{k_{j+1}-k_j}L'=L$.
However, this is clearly a consequence of Jensen's inequality for the convex function $t\mapsto t^p$ on $[0,\infty)$.

Finally, we turn to part (c) and simplify $\Xi_{k}(\widetilde{F})$ as
\begin{align*}
\Xi_{k}(\widetilde{F}) & = \int_{\mathbb{R}_{+}^{2}} d^{(p-1)k}\!\!\sum_{\substack{L\in\mathcal{I}\\ |L|=d^{-k}}}
\!\!\Big(\int_{L}\widetilde{F}(z,y_1)\widetilde{F}(z,y_2)dz\Big)^p \,\varphi_{k}(y_1\!\ominus\!y_2) \,dy_1 dy_2 \\
& = \int_{\mathbb{R}_{+}^{2}} d^{(p-1)k}\!\!\sum_{\substack{L\in\mathcal{I}\\ |L|=d^{-k}}}
\!\!\Big(\int_{L}\widetilde{F}(z,y\oplus t)\widetilde{F}(z,y)dz\Big)^p \,\varphi_{k}(t) \,dy dt .
\end{align*}
By H\"{o}lder's inequality
\begin{align*}
\Xi_{k}(\widetilde{F}) & \leq \int_{\mathbb{R}_{+}} \sum_{\substack{L\in\mathcal{I}\\ |L|=d^{-k}}}\! |L|\,
\bigg(\int_{\mathbb{R}_{+}} \Big(\frac{1}{|L|}\int_{L}\widetilde{F}(z,y)^2 dz\Big)^p dy\bigg) \,\varphi_{k}(t) \,dt \\
& \leq \int_{\mathbb{R}_{+}} \bigg(\sum_{\substack{L\in\mathcal{I}\\ |L|=d^{-k}}}
\int_{L\times\mathbb{R}_{+}}\!\! \widetilde{F}(z,y)^{2p} dz dy\bigg) \,\varphi_{k}(t) \,dt
= \|\widetilde{F}\|^{2p}_{\textup{L}^{2p}(\mathbb{R}_{+}^{2})} ,
\end{align*}
which is exactly what we needed.
\end{proof}

Combining parts (a)--(c) of Lemma \ref{auxineqlemma} we establish \eqref{eqsufficienttheta} and thus also
complete the proof of Proposition \ref{bilinproprestated} in the particular case when $p$ is an integer.

In order to prove the result for a general $2\leq p<\infty$, we introduce a bilinear vector-valued operator
$$ T(F,G) := \big(\textup{A}_{k_{j+1}}(F,G)-\textup{A}_{k_{j}}(F,G)\big)_{0\leq j\leq m-1} $$
and the norm
$$ \|(V_j)_{0\leq j\leq m-1}\|_{\ell^{p}(\textup{L}^{p}(\mathbb{R}_{+}^{2}))}
:= \Big(\sum_{j=0}^{m-1}\|V_j\|_{\textup{L}^{p}(\mathbb{R}_{+}^{2})}^{p}\Big)^{1/p} . $$
Observe that Proposition \ref{bilinproprestated} is equivalent to boundedness of $T$ between Banach spaces
$$ \textup{L}^{2p}(\mathbb{R}_{+}^{2})\times\textup{L}^{2p}(\mathbb{R}_{+}^{2}) \to \ell^{p}(\textup{L}^{p}(\mathbb{R}_{+}^{2})) , $$
with an operator norm depending only on $p$.
It remains to interpolate between the integer values of $p$ using the multilinear complex interpolation of Banach-valued sequences,
discussed in \cite[\S 4.4 \& \S 5.6]{BerghLofstrom76}.

\section{The transference argument}
\label{sectiontransfer}

Even though the remaining part of the proof of Theorem \ref{bilintheoremmain} is an instance of a rather standard transference principle
and is easily adapted from \cite{ConzeLesigne84}, \cite{Furstenberg81}, or \cite{FurstenbergKatznelson78}, we prefer to include it for completeness.

Fix two functions $f,g$, an exponent $2\leq p<\infty$, and some non-negative integers $n_0<n_1<\cdots<n_m$.
The estimate in Theorem \ref{bilintheoremmain} can by homogeneity in $f$ and $g$ be equivalently written as
\begin{equation}\label{eqdehomogenized}
\int_{X}\sum_{j=1}^{m}\big|\textup{M}^{\mathbb{A}}_{n_{j-1}}(f,g)-\textup{M}^{\mathbb{A}}_{n_{j}}(f,g)\big|^p \,d\mu
\,\leq\, \int_{X}\frac{1}{2} C_{p} \big(|f|^{2p}+|g|^{2p}\big) \,d\mu .
\end{equation}
First, we claim that it is enough to prove \eqref{eqdehomogenized} in the particular case when
each set $E\in\mathcal{F}$ invariant under both $S$ and $T$ has either $\mu(E)=0$ or $\mu(E)=1$,
i.e.\@ when the action $(S^a T^b)_{(a,b)\in\mathbb{A}^\omega\times\mathbb{A}^\omega}$ of $\mathbb{A}^\omega\times\mathbb{A}^\omega$
on $(X,\mathcal{F},\mu)$ is ergodic.
Indeed, we can apply the ergodic decomposition by Varadarajan \cite{Varadarajan63} to obtain a probability space $(Y,\mathcal{G},\nu)$
and a family $(\mu_y)_{y\in Y}$ of probability measures on $(X,\mathcal{F})$ such that:
\begin{itemize}
\item for every $y\in Y$ the measure $\mu_y$ is invariant and ergodic with respect to the above action of $\mathbb{A}^\omega\times\mathbb{A}^\omega$,
\item $\int_{X}h\,d\mu = \int_{Y}\big(\int_{X}h\,d\mu_y\big)d\nu(y)$ holds for any $h\in\textup{L}^{1}(X,\mathcal{F},\mu)$.
\end{itemize}
Therefore, once we establish the ergodic case, we will have \eqref{eqdehomogenized} with each $\mu_y$ in place of $\mu$
and it will only remain to integrate over $Y$ with respect to the measure $\nu$.

Next, if we additionally assume that $(S^a T^b)_{(a,b)\in\mathbb{A}^\omega\times\mathbb{A}^\omega}$ is an ergodic action $(X,\mathcal{F},\mu)$,
then using the Lindenstrauss pointwise ergodic theorem \cite{Lindenstrauss01} for the F{\o}lner sequence $(\Phi_N\!\times\! \Phi_N)_{N=0}^{\infty}$ we obtain
$$ \lim_{N\to\infty} \frac{1}{|\Phi_N|^2}\sum_{a,b\in \Phi_N} h(S^a T^b x) = \int_{X} h \,d\mu $$
for every $h\in\textup{L}^{1}(X,\mathcal{F},\mu)$ and for $\mu$-a.e.\@ $x\in X$.
Thus, we can choose (a so-called \emph{generic point}) $x_0\in X$ such that the left hand side of \eqref{eqdehomogenized} can be expanded as
\begin{align*}
\lim_{N\to\infty} \frac{1}{|\Phi_N|^2}\sum_{a,b\in \Phi_N} \sum_{j=1}^{m}
\bigg|\frac{1}{|\Phi_{n_{j-1}}|}\sum_{c\in \Phi_{n_{j-1}}} f(S^{a+c}T^{b}x_0) g(S^{a}T^{b+c}x_0)\ \ & \\[-1mm]
- \frac{1}{|\Phi_{n_j}|}\sum_{c\in \Phi_{n_j}} f(S^{a+c}T^{b}x_0) g(S^{a}T^{b+c}x_0)\bigg|^p , &
\end{align*}
while the right hand side expands into
$$ \lim_{N\to\infty} \frac{1}{|\Phi_N|^2}\sum_{a,b\in \Phi_N} \frac{1}{2} C_{p} \big(|f(S^a T^b x_0)|^{2p}+|g(S^a T^b x_0)|^{2p}\big) . $$
We see that it suffices to fix a positive integer $N$ larger than $n_m$, define the functions
$F'$, $G'$, and $\textup{A}'_{n}(F',G')$ on $\mathbb{A}^\omega\times\mathbb{A}^\omega$ by
$$ F'(a,b):=\left\{\begin{array}{cl} f(S^a T^b x_0) & \text{if } a,b\in \Phi_N,\\ 0 & \text{otherwise}, \end{array}\right.
\quad G'(a,b):=\left\{\begin{array}{cl} g(S^a T^b x_0) & \text{if } a,b\in \Phi_N,\\ 0 & \text{otherwise}, \end{array}\right. $$
$$ \textup{A}'_{n}(F',G')(a,b) := \frac{1}{|\Phi_n|}\sum_{c\in\Phi_n} F'(a+c,b) G'(a,b+c) , $$
for each integer $0\leq n\leq N$, and then prove the inequality
\begin{equation}\label{eqfinaldiscrete}
\sum_{j=1}^{m} \big\|\textup{A}'_{n_{j-1}}(F',G')-\textup{A}'_{n_j}(F',G')\big\|^{p}_{\ell^{p}}
\leq \frac{1}{2} C_p \big(\|F'\|^{2p}_{\ell^{2p}}+\|G'\|^{2p}_{\ell^{2p}}\big) .
\end{equation}
Here the space $\ell^{p}$ is understood with respect to the counting measure on $\mathbb{A}^\omega\!\times\!\mathbb{A}^\omega$.

Finally, in order to derive \eqref{eqfinaldiscrete} from Proposition \ref{bilinproprestated} we view at $F'$ and $G'$
as functions on $\mathbb{Z}_{+}\!\times\!\mathbb{Z}_{+}$, where $\mathbb{Z}_+$ denotes the set of nonnegative integers,
and extend them to $\mathbb{R}_{+}\!\times\!\mathbb{R}_{+}$
in a way that they become constant on squares $[\alpha,\alpha+1)\times[\beta,\beta+1)$, $\alpha,\beta\in\mathbb{Z}_+$.
More precisely, restrictions of the maps $\iota,\kappa$ defined earlier,
$$ \begin{array}{ll}
\iota'\colon \mathbb{A}^\omega\to\mathbb{Z}_+, & \iota'\colon (a_k)_{k=0}^{\infty} \mapsto \sum_{k=0}^{\infty} a_k d^k , \\[1mm]
\kappa'\colon \mathbb{Z}_+\to\mathbb{A}^\omega, & \kappa'\colon t\mapsto (\lfloor d^{-k}t\rfloor\,\textup{mod}\,d)_{k=0}^{\infty} ,
\end{array} $$
realize a one-to-one and onto correspondence between elements of $\mathbb{A}^\omega$ and nonnegative integers.
If we define $F,G\colon\mathbb{R}_{+}^{2}\to\mathbb{C}$ by
$$ \begin{array}{l}
F(x,y) := \displaystyle\sum_{\alpha,\beta\in\mathbb{Z}_+} F'(\kappa'(\alpha),\kappa'(\beta))
\,\mathbf{1}_{[\alpha,\alpha+1)}(x) \,\mathbf{1}_{[\beta,\beta+1)}(y) , \\
G(x,y) := \displaystyle\sum_{\alpha,\beta\in\mathbb{Z}_+} G'(\kappa'(\alpha),\kappa'(\beta))
\,\mathbf{1}_{[\alpha,\alpha+1)}(x) \,\mathbf{1}_{[\beta,\beta+1)}(y) ,
\end{array} $$
then
\begin{align*}
\textup{A}_{-n}(F,G)(x,y) & = \frac{1}{d^n} \int_{[0,d^n)} \sum_{\substack{\alpha,\beta,\gamma\in\mathbb{Z}_{+}\\ 0\leq\gamma\leq d^n-1}}
F'\big(\kappa'(\alpha\oplus\gamma),\kappa'(\beta)\big) \,G'\big(\kappa'(\alpha),\kappa'(\beta\oplus\gamma)\big) \\[-3mm]
& \qquad\qquad\qquad\qquad\qquad\qquad
\mathbf{1}_{[\alpha,\alpha+1)}(x) \,\mathbf{1}_{[\beta,\beta+1)}(y) \,\mathbf{1}_{[\gamma,\gamma+1)}(t) \,dt \\[1mm]
& = \sum_{\alpha,\beta\in\mathbb{Z}_+} \textup{A}'_{n}(F',G')(\kappa'(\alpha),\kappa'(\beta))
\,\mathbf{1}_{[\alpha,\alpha+1)}(x) \,\mathbf{1}_{[\beta,\beta+1)}(y) ,
\end{align*}
so it remains to apply the estimate from Proposition \ref{bilinproprestated} with $k_j=-n_{m-j}$ for $j=0,1,\ldots,m$.

\section{Closing remarks}
\label{sectionclosing}

The idea of studying convergence of \eqref{eqmultipleziaverages} for $r=2$ via bilinear singular operators
was formulated by Demeter and Thiele \cite[\S 6]{DemeterThiele10}.
They suggest proving estimates for the operator
\begin{equation}\label{eqtriangularht}
T(F,G)(x,y) := \textup{p.v.}\int_{\mathbb{R}} F(x+t,y) G(x,y+t) \frac{dt}{t} ,
\end{equation}
for which still no positive or negative results are known at the time of writing; also see the remarks in \cite[\S 1]{KovacThiele13}.
The approach pursued in this paper differs slightly in the sense that the $\textup{L}^p$ norms are expanded out after applying
Lemma \ref{diffestimatelemma} and lead to ``less singular'' objects, closer to the level of operators studied in \cite{KovacThiele13}.

In order to answer the previously mentioned question by Avigad and Rute \cite{AvigadRute13} in the case $r=2$,
one would have to prove Proposition \ref{bilinproprestated} with the operation $\oplus$ replaced with the usual addition.
The main difficulty is then the lack of perfect localization in both time and frequency, which guarantees that the form $\Lambda$
has better cancellation properties than the original bilinear averages.
It is somewhat likely that certain ``error terms'' that appear would have to be controlled by objects similar to \eqref{eqtriangularht}.
However, already the results on entangled Calder\'{o}n-Zygmund operators in \cite{Kovac11} and \cite{KovacThiele13}
seem to be difficult to adapt to the Euclidean setting.

\section*{Acknowledgments}

I am grateful to Ciprian Demeter and Christoph Thiele for informing me about the question by Jeremy Avigad and Jason Rute \cite{AvigadRute13},
for pointing out the relevance of the techniques from \cite{Kovac11}, \cite{Kovac12}, and \cite{KovacThiele13} to the problem,
and for several earlier suggestions to reprove norm convergence of bilinear ergodic averages using methods from multilinear harmonic analysis.
I would also like to thank Fr\'{e}d\'{e}ric Bernicot for a useful discussion, which revealed the still unresolved obstructions
to an adaptation of the presented proof to bilinear averages \eqref{eqmultipleziaverages} for two commuting transformations.

\end{document}